\newtheorem{theorem}{Theorem}[section]
\newtheorem{lemma}[theorem]{Lemma}
\newtheorem{proposition}{Proposition}
\theoremstyle{definition}
\newtheorem*{theorem 1}{Theorem 1}
\newtheorem*{theorem 2}{Theorem 2}
\newtheorem*{claim}{Claim}
\title[Running heading with forty characters or less] 
      {Estimates of products of norms for flows away from homoclinic tangency}
\begin{document}

\maketitle

\centerline{\scshape Qianying Xiao}

\begin{abstract}
We give a detailed proof for the estimates of products of norms for flows
away from homoclinic tangency. The estimates we can prove is weaker than the
expectation of experts.
\end{abstract}
\section{Introduction}

Assume $X\in \mathcal{X}^1(M)$ is away from homoclinic tangencies. $\phi_t$ is the flow generated by $X$, $\psi_t$ is the linear Poincar\'e flow. There exists a neighborhood $\mathcal{U}$ of $X$ such that any $Y\in \mathcal{U}$ has no homoclinic tangencies.

Wen~\cite{Wen02} shows that there exist $\mathcal{U}_2\subset \mathcal{U}$, $\gamma>0$, $N>0$, $\lambda>0$, $\delta>0$, and $T>0$, any $Y\in \mathcal{U}$, any periodic orbits $orb(x)$ of $Y$ with period $\tau>T$,
\begin{enumerate}
\item $\psi_\tau (x)$ has at most one eigenvalue with modulo in $[(1+\delta)^{-\tau},(1+\delta)^\tau]$, i.e. there exists eigenspace splitting $\mathcal{N}_x=V^s(x)+V^c(x)+V^u(x)$ such that $ V^c(x) $ is the eigenspace corresponding to the eigenvalue with modulo in $[(1+\delta)^{-\tau},(1+\delta)^\tau]$, and $\dim V^c(x)\leq1$.
\item $\angle(V^s(x),V^c(x)+V^u(x))>\gamma$, $\angle(V^s(x)+V^c(x),V^u(x))>\gamma$.
\item The splitting is dominated:
 $$\|\psi_t|V^s(x)\| \|\psi_{-t}|V^c(\phi_t(x))+V^u(\phi_t(x))\|\leq Ne^{-\lambda t},$$

$$\|\psi_t|V^s(x)+V^c(x)\| \|\psi_{-t}|V^u(\phi_t(x))\| \leq Ne^{-\lambda t}.$$
\item Estimation of norms of products:
 $$\|\psi_\tau|V^s(x)\| \leq N(1+\delta)^{-\tau},$$
$$\|\psi_{-\tau}|V^u(x)\| \leq N(1+\delta)^{-\tau}.$$

\end{enumerate}

The above estimates are improved by Wen~\cite{Wen04}. In ~\cite[Lemma 3.4]{Wen04}, Wen gives the estimates of products of norms, which are stronger than the previous result. Wen regards the proofs are standard in Liao's and Mane's work( for instance see~\cite[Page 528]{Mane}), and he doesn't give details of the proofs there. We try to prove it as an exercise, and it turns out we are not able to prove the Item 2 of ~\cite[Lemma 3.4]{Wen04}. Instead we prove a weaker version, and it works for our purpose to prove Theorem 2.1 of our work ~\cite{XZ}. The following ideas of Liao: minimal nonhyperbolic set, (local) star flows and the Selecting lemma, which are stressed by Wen extensively, help us to avoid a direct use of the non-proved estimates.

\begin{proposition}
There exists neighborhood $\mathcal{U}_3\subset \mathcal{U}_2$ of $X$, $0<\lambda_1<1$, $T^\prime>0$, any $Y\in \mathcal{U}_3$, any periodic point $x$ of $Y$ with period $\tau>\max\{T,T^\prime\}$, any partition of $[0,\tau]$:
$$0=t_0<t_1<\cdots<t_\ell=\tau$$
such that $t_{i+1}-t_i\geq T^\prime$ for $0\leq i\leq \ell -1$, the following are satisfied:
either
$$\prod^{\ell-1}_{i=0}\|\psi_{t_{i+1}-t_i}|V^s(\phi_{t_i})\|\leq N\lambda_1^\tau,$$
or
$$\prod^{\ell-1}_{i=0}\|\psi_{t_i-t_{i+1}}|V^s(\phi_{t_{i+1}})\|\leq N\lambda_1^\tau,$$
\end{proposition}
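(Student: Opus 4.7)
The plan is to derive the one-point estimate
\[\|\psi_t|V^s(y)\|\cdot\|\psi_{-t}|V^u(\phi_t y)\|\leq N^2 e^{-2\lambda t}\]
valid for every $y$ and every $t>0$, and then to conclude by a multiplicative dichotomy after taking the product along the partition. (I read the second alternative of the proposition as containing a typo for $V^u$ rather than $V^s$, since item~(4) forces $\|\psi_{t_i-t_{i+1}}|V^s(\phi_{t_{i+1}})\|$ to grow exponentially; the natural dichotomy is between forward contraction on $V^s$ and backward contraction on $V^u$.)

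To obtain the one-point bound I would use the hypothesis $\dim V^c\le 1$ to treat the center direction as a cancelling pivot. If $\dim V^c = 0$ both dominations of item~(3) already read $\|\psi_t|V^s\|\cdot\|\psi_{-t}|V^u\|\le Ne^{-\lambda t}$ and there is nothing more to do. If $\dim V^c = 1$, one-dimensionality gives $\|\psi_{-t}|V^c(\phi_t y)\| = \|\psi_t|V^c(y)\|^{-1}$, and since $V^c$ is a subspace of both $V^c+V^u$ and $V^s+V^c$, the operator-norm monotonicity yields
\[\|\psi_{-t}|V^c{+}V^u(\phi_t y)\|\ge\|\psi_t|V^c(y)\|^{-1},\qquad \|\psi_t|V^s{+}V^c(y)\|\ge\|\psi_t|V^c(y)\|.\]
Plugging these into the two inequalities of item~(3) gives
\[\|\psi_t|V^s(y)\|\le Ne^{-\lambda t}\|\psi_t|V^c(y)\|,\qquad \|\psi_{-t}|V^u(\phi_t y)\|\le Ne^{-\lambda t}\|\psi_t|V^c(y)\|^{-1},\]
and multiplying the two cancels the pivot $V^c$ to produce the desired pointwise bound.

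Applying this estimate segment by segment along the partition and taking the product over $i$,
\[\Bigl(\prod_{i=0}^{\ell-1}\|\psi_{t_{i+1}-t_i}|V^s(\phi_{t_i})\|\Bigr)\Bigl(\prod_{i=0}^{\ell-1}\|\psi_{t_i-t_{i+1}}|V^u(\phi_{t_{i+1}})\|\Bigr)\le N^{2\ell}e^{-2\lambda\tau}.\]
Choose $T'\ge(2/\lambda)\log N$, so that $N^2\le e^{\lambda(t_{i+1}-t_i)}$ holds on every segment; the right side then collapses to $e^{-\lambda\tau}$. A multiplicative dichotomy (if $AB\le C$ then $A\le\sqrt{C}$ or $B\le\sqrt{C}$) forces at least one of the two products on the left to be bounded by $e^{-\lambda\tau/2}$, and the proposition follows with $\lambda_1=e^{-\lambda/2}$, the factor $N$ absorbing any slack.

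The only real conceptual point is the decision to use $V^c$ as a cancelling pivot rather than a quantity to estimate: the norm $\|\psi_t|V^c\|$ admits no uniform control over arbitrary subintervals of the orbit (this is essentially the obstacle Wen's stronger estimate is meant to overcome), but the product bound on $V^s\times V^u$ sidesteps the difficulty entirely. Once the pivot step is in place the remainder is bookkeeping, and I would expect no genuine obstacle beyond ensuring that $T'$ is chosen large enough in terms of $N$ and $\lambda$.
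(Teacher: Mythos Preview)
Your argument is correct and takes a genuinely different route from the paper's. The paper follows the Liao--Ma\~n\'e perturbation scheme: at each partition time $t_i$ it builds a small linear perturbation $Q_i$ (via the linear-algebra lemma stated just before the proof) so that the composed cocycle nearly realizes the full product of norms, then invokes Franks' lemma to obtain a vector field $Z'\in\mathcal U_2$ whose linear Poincar\'e flow over the orbit is this composition, and finally applies items~(1) and~(4) of Wen's list to $Z'$ to bound that norm. Your approach bypasses Franks' lemma entirely: you use only the two domination inequalities of item~(3) together with the one-dimensionality of $V^c$ to cancel the center contribution and obtain the pointwise bound $\|\psi_t|V^s\|\cdot\|\psi_{-t}|V^u\|\le N^2e^{-2\lambda t}$, after which the multiplicative dichotomy is immediate. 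What you gain is brevity and the avoidance of any perturbation machinery; what you lose is that when $\dim V^c(x)=1$ the paper's method in fact establishes the \emph{first} alternative unconditionally (no dichotomy), whereas yours yields only the dichotomy in both cases. Since the proposition as stated asks only for the dichotomy, this is no loss here. Your reading of the second alternative as a typo for $V^u$ is confirmed by the paper's own proof.
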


\section{The proofs}

Let us give the details of estimating the products of norms.
For the sake of completeness, let us insert a well-known fact in linear algebra.
Let $E^n$ denotes the $n$-dimensional Euclidean space. $A:E^n\rightarrow E^n$ is a linear map.

\begin{lemma}~\cite[Page 528]{Mane}
Any $\epsilon>0$, $v\in E^n$ with $\|v\|=1$, there exist $Q:E^n\rightarrow E^n$ such that $\|Q-\mathrm{id} \|\leq \epsilon$, and that $\| A \circ Q(v) \| \geq \epsilon n^{ - \frac{3}{2}} \|A\|$.

\end{lemma}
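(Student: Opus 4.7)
The plan is to reduce the problem to finding a single well-chosen unit vector $e_j$ along which $A$ has large norm, and then to define $Q$ as a rank-one perturbation of the identity that mixes this direction into $v$.

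First, I complete $v$ to an orthonormal basis $e_1=v,e_2,\ldots,e_n$ of $E^n$, and pick a unit vector $u$ attaining $\|Au\|=\|A\|$. Writing $u=\sum_i\alpha_i e_i$ with $\sum_i\alpha_i^2=1$ and applying the triangle inequality followed by Cauchy--Schwarz gives
$$\|A\|^2=\Big\|\sum_i\alpha_i Ae_i\Big\|^2\leq\Big(\sum_i|\alpha_i|\,\|Ae_i\|\Big)^2\leq\sum_i\|Ae_i\|^2,$$
so some index $j\in\{1,\ldots,n\}$ satisfies $\|Ae_j\|\geq n^{-1/2}\|A\|$. If $j=1$ then $\|Av\|\geq n^{-1/2}\|A\|$ and I take $Q=\mathrm{id}$; assuming $\epsilon\leq 1$ (which we may, since the conclusion is trivial otherwise), the bound $n^{-1/2}\geq \epsilon n^{-3/2}$ settles this case.

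For $j\neq 1$, I define $Q$ by $Q(e_1)=e_1+\epsilon\sigma e_j$ and $Q(e_i)=e_i$ for $i\neq 1$, where $\sigma\in\{\pm1\}$ is chosen so that $\sigma\langle Av,Ae_j\rangle\geq 0$. Then $Q-\mathrm{id}$ is the rank-one operator sending $e_1\mapsto\epsilon\sigma e_j$ and annihilating $\{e_1\}^{\perp}$, so $\|Q-\mathrm{id}\|=\epsilon$. The sign choice makes the cross term in $\|AQv\|^2=\|Av+\epsilon\sigma Ae_j\|^2$ nonnegative, yielding
$$\|AQv\|^2\geq\|Av\|^2+\epsilon^2\|Ae_j\|^2\geq\epsilon^2\|A\|^2/n,$$
and therefore $\|AQv\|\geq \epsilon n^{-1/2}\|A\|$, which is in fact stronger than the claimed $\epsilon n^{-3/2}\|A\|$.

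The only nontrivial point in the argument is the sign choice $\sigma=\pm1$, which prevents destructive interference between $Av$ and the perturbation $\epsilon Ae_j$; equivalently, one may consider the two candidate maps $Q^+$ and $Q^-$ corresponding to $\sigma=+1$ and $\sigma=-1$ and retain whichever gives the larger value of $\|AQv\|$. Apart from this observation, the proof is elementary linear algebra: completing $v$ to an orthonormal basis, invoking Cauchy--Schwarz to locate a coordinate direction on which $A$ is large, and perturbing $v$ in that direction by rank one. I do not anticipate any real obstacle.
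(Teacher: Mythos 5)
Your argument is correct in substance and shares the paper's core mechanism -- a rank-one perturbation $Q$ of the identity, with a sign chosen to prevent cancellation -- but your execution differs in two ways that make it cleaner and give a strictly better constant. The paper works in a generic orthonormal basis: it first relabels so that $|v_1|\geq n^{-1/2}$, then picks $i$ with $\|Ae_i\|\geq n^{-1/2}\|A\|$ and $j$ with $|(Ae_i)_j|\geq n^{-1/2}\|Ae_i\|$, sets $Q(e_1)=e_1+\epsilon t e_i$, and bounds the single coordinate $|(AQv)_j|$ from below; each of the two coordinate-extraction steps costs a factor $n^{-1/2}$, and the size of $v_1$ costs a third, giving $\epsilon n^{-3/2}\|A\|$. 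You instead adapt the basis so $e_1=v$ (eliminating the $|v_1|$ factor entirely), pick a single $j$ with $\|Ae_j\|\geq n^{-1/2}\|A\|$, and estimate the full norm $\|AQv\|^2=\|Av+\epsilon\sigma Ae_j\|^2$ by expanding the square, so you only pay one factor of $n^{-1/2}$ and end with $\epsilon n^{-1/2}\|A\|$, which is stronger by a factor of $n$ and still implies the stated inequality. One small wrinkle: your remark that ``we may assume $\epsilon\leq1$ since the conclusion is trivial otherwise'' is not right -- a larger $\epsilon$ loosens the constraint on $Q$ but \emph{strengthens} the required conclusion -- and without it your $j=1$ case (where you take $Q=\mathrm{id}$) fails once $\epsilon>n$. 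The fix is to drop the case split altogether: even when $j=1$ you can set $Q(e_1)=e_1+\epsilon\sigma e_j=(1+\epsilon\sigma)e_1$ with $\sigma=+1$, and the same expansion $\|AQv\|^2\geq\|Av\|^2+\epsilon^2\|Ae_j\|^2\geq\epsilon^2 n^{-1}\|A\|^2$ goes through uniformly for all $\epsilon>0$.
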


\begin{proof}
Let $\{ e_1, \cdots, e_n \} $ be an orthonormal basis, $ v=(v_1, \cdots, v_n) $. We may as well assume $ |v_1| \geq n^{- \frac{1}{2}} \|v\| $.

$ a_{ji} = ( A e_i )_j $ such that

$$ \|A\| \leq n^{ \frac{1}{2} } \|A e_i\| \leq n |a_{ji}| $$.

$Q: E^n \rightarrow E^n$ such that $ Q(e_1)=e_1+ \epsilon t e_i  $, $t=1$ or $-1$, and $Q(e_k)=e_k$ for $k>1$. Obviously $ \| Q- \mathrm{id} \| \leq \epsilon $.

$$ \|A \circ Q (v) \| \geq | (A \circ Q (v))_j |= |(A(v))_j + \epsilon t a_{ji} v_1 |.$$

Choose $t$ such that $ (A(v))_j \cdot \epsilon t a_{ji} v_1 \geq 0$, then

$$  \|A \circ Q (v) \| \geq \epsilon |a_{ji} v_1| \geq \epsilon n^{ -\frac{3}{2}} \|A \| .$$

\end{proof}

\begin{proof}[Proof of the Proposition]
Let $\epsilon >0$, $ \mathcal{U}_3 \subset \mathcal{U}_2 $ be determined by Frank's lemma. $K$ is an upperbound of $ \| \psi^Y_t \|$ for $ Y\in \mathcal{U}_3 $, $ 1\leq t \leq 2 $.

$\epsilon^\prime = \epsilon K^{-1}$, $ \epsilon_1>0$ such that
$$ \frac{1+\gamma}{\gamma}2\epsilon_1<\epsilon^\prime. $$

$$ T^\prime > \frac{\frac {3} {2} \ln n-\ln \epsilon_1} {\ln (1+\frac {1} {2} \delta)}. $$

Given any $ Y\in \mathcal{U}_3 $, any periodic point $x$ of $ Y$ with periodic $ \tau \geq \max\{ T^\prime, T\} $, any partition of $[0,\tau]$:
 $$ 0=t_0<t_1<\cdots<t_\ell=\tau, $$
such that $ t_{i+1}-t_1 \geq T^\prime$ for $0\leq i \leq \ell-1$.

\begin{itemize}

\item[Case 1]
Assume $ \dim V^c(x) = 1 $. Denote $\mathcal{N}_{\phi_{t_i}(x)}$ as $N_i$, $V^\sigma_{\phi_{t_i}(x)}$ as $V^\sigma_i$ for $\sigma =s,c,u$, $i=0,\cdots,\ell-1$.

Define
$ Q_1: N_1 \rightarrow N_1 $ such that $ Q_1|V^c_1+V^u_1 =\mathrm{id} $, $Q_1(V^s_1)=V^s_1$, $ \| Q_1|V^s_1-\mathrm{id}\| <\epsilon_1 $, $v_1\in V^s_1$ such that $ \|v_1\| = \| \psi_{t_1}|V^s(x)\|$, and
$$ \|\psi_{t_2-t_1} \circ Q_1 (v_1) \| \geq \epsilon_1 n^{-\frac{3} {2}} \|\psi_{t_2-t_1}|V^s_1 \| \| \psi_{t_1}|V^s(x)\|. $$\label{equ1}
Moreover,
$ \|Q_1 - \mathrm{id} \|< \frac{1+\gamma}{\gamma}\epsilon_1<\epsilon^\prime$.

Similarly, we can define $ Q_2: N_2 \rightarrow N_2 $ such that $ \| Q_2-\mathrm{id} \|<\epsilon^\prime$, $ Q_2|V^c_2 + V^u_2 = \mathrm{id}$, $ Q_2(V^s_2)=V^s_2$, and that
$$ \| \psi_{t_3 - t_2} \circ Q_2 \circ \psi_{t_2 -t_1} \circ Q_1 \circ \psi_{t_1}|V^s(x) \| $$
$$\geq (\epsilon_1 n^{-\frac{3}{2}})^2 \|\psi_{t_3-t_2}|V^s_1\| \| \psi_{t_2-t_1}|V^s_1 \| \|\psi_{t_1}|V^s(x).$$

$$ \cdots \cdots $$

$Q_{\ell-1}:N_{\ell-1} \rightarrow N_{\ell-1}$, such that $ \|Q_{\ell-1}-\mathrm{id} \|<\epsilon^\prime $, $Q_{\ell-1}|V^c_{\ell-1} + V^u_{\ell-1}= \mathrm{id}$, $ Q_{\ell-1}(V^s_{\ell-1})= V^s_{\ell-1}$.

\begin{align}\label{equ1}
\|(\prod^{\ell-1}_{i=1} \psi_{t_{i+1}-t_i}|V^s_i\circ Q_i) \circ \psi_{t_1}|V^s(x) \|\\
\geq (\epsilon_1 n^{-\frac{3}{2}})^{\ell-1} \prod^{\ell-1}_{i=0} \|\psi_{t_{i+1}-t_i}|V^s_i \|\\
\geq (\epsilon_1 n^{-\frac{3}{2}})^{\frac{\tau}{T^\prime}} \prod^{\ell-1}_{i=0} \|\psi_{t_{i+1}-t_i}|V^s_i \|
\end{align}

Denote $L_{i,r}=\psi_{t_{i+1}-t_i} \circ (\mathrm{id}+r (Q_i-\mathrm{id}))$ for $i=1, \cdots, \ell-1$,
$L_r=L_{\ell-1,r} \circ \cdots \circ L_{1,r} \circ L_{0,r}$ $(0\leq r \leq 1)$.

\begin{claim}
\emph{The spectral radius of $L_1|V^s(x)$ is less than $(1+\delta)^{-\tau}$}. Otherwise, for some $r$, $L_r$ admits at least two eigenvalues with modulo in $[(1+\delta)^{-\tau},(1+\delta)^\tau]$. By Frank's lemma, there exists $Z\in \mathcal{U}_2$ such that $\psi^Z_\tau(x)=L_r$, a contradiction.
\end{claim}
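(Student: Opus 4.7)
The plan is to argue by contradiction via a continuity argument along the one-parameter family $\{L_r\}_{r\in[0,1]}$: if the spectral radius of $L_1|V^s(x)$ is at least $(1+\delta)^{-\tau}$, then for some intermediate parameter $r^*\in(0,1]$ the map $L_{r^*}$ acquires two eigenvalues of modulus in the center interval $[(1+\delta)^{-\tau},(1+\delta)^\tau]$, and Frank's lemma then produces $Z\in\mathcal{U}_2$ contradicting Wen's item~1.

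First I would verify that every $L_r$ preserves the splitting $V^s(x)\oplus(V^c(x)+V^u(x))$. Because each $Q_i$ is the identity on $V^c_i+V^u_i$ and maps $V^s_i$ into itself, the linear interpolation $\mathrm{id}+r(Q_i-\mathrm{id})$ has the same invariant subspaces; combined with invariance of the dominated splitting under $\psi_t$, this gives the claim. In particular $L_r|V^c(x)+V^u(x)$ equals $\psi_\tau|V^c(x)+V^u(x)$ for every $r$, so by Wen's items~1 and~4 (the latter applied to $\psi_{-\tau}$) this factor always contributes one eigenvalue of modulus in $[(1+\delta)^{-\tau},(1+\delta)^\tau]$, from $V^c$, and eigenvalues of modulus strictly greater than $(1+\delta)^\tau$, from $V^u$.

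Next I would study $\rho(r):=\rho(L_r|V^s(x))$. At $r=0$ we have $L_0=\psi_\tau$, and Wen's item~1 forces $\rho(0)<(1+\delta)^{-\tau}$. Assuming toward contradiction that $\rho(1)\geq(1+\delta)^{-\tau}$, continuity of the spectral radius in the matrix entries and the intermediate value theorem furnish $r^*\in(0,1]$ with $\rho(r^*)=(1+\delta)^{-\tau}$. Then $L_{r^*}$ has an eigenvalue on the stable factor of modulus exactly $(1+\delta)^{-\tau}$, and together with the $V^c$-eigenvalue from the first step this produces at least two eigenvalues of $L_{r^*}$ lying in the closed center interval.

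To close, since $\|r^*(Q_i-\mathrm{id})\|\leq\|Q_i-\mathrm{id}\|<\epsilon'$ uniformly in $i$, and $\mathcal{U}_3$ was chosen by Frank's lemma for precisely this $\epsilon'$, there exists $Z\in\mathcal{U}_2$ for which $x$ is a periodic point of period $\tau$ with $\psi^Z_\tau(x)=L_{r^*}$. Since $\mathcal{U}_2\subset\mathcal{U}$, Wen's item~1 applied to $Z$ forbids two eigenvalues in the center interval, giving the contradiction. The main obstacle I anticipate is ensuring that Frank's lemma applies uniformly along the entire path of perturbations, which is exactly why $\epsilon'=\epsilon K^{-1}$ and $\epsilon_1$ with $\tfrac{1+\gamma}{\gamma}2\epsilon_1<\epsilon'$ were fixed at the outset; continuity of the spectral radius under the linear interpolation is then the only analytic ingredient needed.
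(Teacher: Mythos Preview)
Your proposal is correct and follows exactly the approach the paper sketches inside the claim itself: invariance of the splitting under each $L_r$, the constant eigenvalue coming from $V^c$, a continuity/intermediate-value argument on $r\mapsto\rho(L_r|V^s(x))$, and Frank's lemma to realize $L_{r^*}$ as $\psi^Z_\tau(x)$ for some $Z\in\mathcal{U}_2$. The only superfluous detail is the appeal to item~4 for the $V^u$-eigenvalues; their modulus exceeding $(1+\delta)^\tau$ already follows from the eigenspace definition in item~1.
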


Apply Frank's lemma, there exist $Z^\prime \in \mathcal{U}_2$ such that $Z^\prime=Y$ along $orb(x)$, $\psi^{Z^\prime}_\tau(p)=L_1$, $V^\sigma_{Z^\prime}(x)=V^\sigma(x)$, $\sigma=s,c,u$.
Hence $$ \|L_1|V^s(x)\| \leq N(1+\delta)^{-\tau}$$
According to inequality \ref{equ1}, and by $ T^\prime > \frac{\frac {3} {2} \ln n-\ln \epsilon_1} {\ln (1+\frac {1} {2} \delta)} $,

$$\prod^{\ell-1}_{i=0} \|\psi_{t_{i+1}-t_i}|V^s_i\| \leq N[(1+\delta)^{-1}(\epsilon_1 n^{-\frac{3}{2}})]^{-\frac{1}{T^\prime}} < N( \frac{1+\frac{1}{2}\delta}{1+\delta})^\tau $$

Let $\lambda_1=\frac{1+\frac{1}{2}\delta}{1+\delta}$, and we are done.

\item[Case 2]

Assume $\dim V^c(x)=0$.
As in case 1, we have $Q_i$ for $i=1,\cdots,\ell-1$, such that $Q_i|V^u_i=\mathrm{id}$, $Q_i(V^s_i)=V^s_i$, and $\|Q_i|V^s_i-\mathrm{id}\|<\epsilon_1$. Moreover,
$$\|(\prod^{\ell-1}_{i=1} \psi_{t_{i+1}-t_i}|V^s_i\circ Q_i) \circ \psi_{t_1}|V^s(x) \|$$
$$\geq (\epsilon_1 n^{-\frac{3}{2}})^{\frac{\tau}{T^\prime}} \prod^{\ell-1}_{i=0} \|\psi_{t_{i+1}-t_i}|V^s_i \|$$

Similarly, we have $P_i$ for $i=1,\cdots,\ell-1$, such that $P_i|V^s_i=\mathrm{id}$, $P_i(V^u_i)=V^u_i$, and $\|P_i|V^u_i-\mathrm{id}\|<\epsilon_1$, and
 $$\| \psi_{-t_1} \circ P_1 \circ \cdots \circ \psi_{t_{\ell-2}-t_{\ell-1}} \circ P_{\ell-1} \circ \psi_{t_{\ell-1}-t_\ell}|V^u(x)\|$$
$$ \geq (\epsilon_1 n^{-\frac{3}{2}})^{\frac{\tau}{T^\prime}} \prod^{\ell-1}_{i=0} \|\psi_{t_i -t_{i+1}}|V^u_{i+1} \| , $$

$$ \| P_1^{-1} \circ Q_i - \mathrm{id} \| \leq \frac{1+\gamma}{\gamma}2\epsilon_1<\epsilon^\prime.$$

For $0\leq \alpha \leq 1$, $0\leq \beta \leq 1$, define $ L_{i,\alpha,\beta}=\psi_{t_{i+1}-t_i} \circ (\mathrm{id}+\alpha(P^{-1}_i-\mathrm{id})) \circ (\mathrm{id}+\beta(Q_i-\mathrm{id})).$

Denote $L_{\alpha,\beta}=L_{\ell-1,\alpha,\beta}\circ\cdots\circ L_{0,\alpha,\beta}$.

\begin{claim}
\emph{Either the spectral radius of $L_{1,1}|V^s(x)$ is less than $ (1+\delta)^{-\tau} $, or the spectral radius of $ L_{1,1}|V^u(x)  $ is greater than $ (1+\delta)^\tau $}. Otherwise some $L_{\alpha,\beta}$ admits two eigenvalues (counting multiplicity) with modulo in $ [(1+\delta)^{-\tau},(1+\delta)^\tau] $. Apply Frank's lemma, there exists $ Z\in \mathcal{U}_2$ such that $\psi^Z_\tau(x)$ has two eigenvalues with modulo in $ [(1+\delta)^{-\tau},(1+\delta)^\tau] $, a contradiction.
\end{claim}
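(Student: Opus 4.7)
The plan is to argue by contradiction: assume $\rho(L_{1,1}|V^s(x)) \geq (1+\delta)^{-\tau}$ and simultaneously $\rho(L_{1,1}|V^u(x)) \leq (1+\delta)^\tau$, and then produce parameters $(\alpha^\ast,\beta^\ast)\in[0,1]^2$ at which $L_{\alpha^\ast,\beta^\ast}$ carries two eigenvalues (counting multiplicity) of modulus in $[(1+\delta)^{-\tau},(1+\delta)^\tau]$. Frank's lemma will then produce $Z\in\mathcal{U}_2$ with $\psi^Z_\tau(x)=L_{\alpha^\ast,\beta^\ast}$, contradicting Item~1 of Wen's theorem inside $\mathcal{U}_2$.

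The first step is to exploit the decoupling of $L_{\alpha,\beta}$ along $V^s(x)\oplus V^u(x)$ (here $V^c(x)=0$). Since $Q_i|V^u_i=\mathrm{id}$ and $P^{-1}_i|V^s_i=\mathrm{id}$, each factor $L_{i,\alpha,\beta}$ preserves $V^s_i$ and $V^u_i$ separately; moreover its restriction to $V^s_i$ involves only $\beta$ and its restriction to $V^u_i$ only $\alpha$. Telescoping gives that $L_{\alpha,\beta}|V^s(x)$ depends only on $\beta$ and $L_{\alpha,\beta}|V^u(x)$ only on $\alpha$, and the spectrum of $L_{\alpha,\beta}$ is the disjoint union with multiplicity of these two restricted spectra.

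Next I would invoke continuity of the spectral radius together with the intermediate value theorem. At $(0,0)$ we have $L_{0,0}=\psi_\tau$, whose $V^s$-eigenvalues all lie in the open disk of radius $(1+\delta)^{-\tau}$ and whose $V^u$-eigenvalues all lie outside the closed disk of radius $(1+\delta)^\tau$. Under the contradiction hypothesis, IVT applied to $\beta\mapsto \rho(L_{0,\beta}|V^s(x))$ yields $\beta^\ast\in(0,1]$ with $\rho=(1+\delta)^{-\tau}$, hence an eigenvalue of $L_{\alpha,\beta^\ast}|V^s(x)$ of modulus exactly $(1+\delta)^{-\tau}$; likewise $\alpha\mapsto\rho(L_{\alpha,0}|V^u(x))$ yields $\alpha^\ast\in(0,1]$ producing an eigenvalue of $L_{\alpha^\ast,\beta}|V^u(x)$ of modulus exactly $(1+\delta)^\tau$. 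Because the two spectra depend on disjoint parameters, both phenomena occur simultaneously at the single pair $(\alpha^\ast,\beta^\ast)$, supplying the required two eigenvalues in $[(1+\delta)^{-\tau},(1+\delta)^\tau]$.

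Finally, Frank's lemma closes the contradiction. The bound $\|P^{-1}_i\circ Q_i-\mathrm{id}\|\leq \frac{1+\gamma}{\gamma}\cdot 2\epsilon_1<\epsilon^\prime=\epsilon K^{-1}$ together with the short-time factor $K$ on $\psi^Y_{t_{i+1}-t_i}$ places each $L_{i,\alpha^\ast,\beta^\ast}$ inside the $\epsilon$-tolerance of Frank's lemma for $\psi^Y$, and the resulting $Z\in\mathcal{U}_2$ satisfies $\psi^Z_\tau(x)=L_{\alpha^\ast,\beta^\ast}$, which has two eigenvalues in $[(1+\delta)^{-\tau},(1+\delta)^\tau]$—impossible by Item~1. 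The main obstacle I anticipate is making the decoupling rigorous enough to justify running the two IVT arguments independently and concluding that they conspire at a single parameter pair; once that is pinned down, everything else reduces to continuous dependence of eigenvalues and the perturbation bookkeeping already in place.
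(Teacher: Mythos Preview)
Your proposal is correct and follows exactly the route the paper sketches: the paper's argument is just the two sentences ``Otherwise some $L_{\alpha,\beta}$ admits two eigenvalues\dots Apply Frank's lemma\dots'', and your decoupling observation (that $L_{\alpha,\beta}|V^s$ depends only on $\beta$ while $L_{\alpha,\beta}|V^u$ depends only on $\alpha$, because $P_i^{-1}|V^s_i=\mathrm{id}$ and $Q_i|V^u_i=\mathrm{id}$) is precisely what makes those two sentences valid. Your worry about ``making the decoupling rigorous enough'' is unfounded---the block structure is exact, so the two IVT arguments genuinely run independently and the pair $(\alpha^\ast,\beta^\ast)$ works without further justification.
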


By Frank's lemma, there exists $Z^\prime\in \mathcal{U}_2$, such that $ \psi^{Z^\prime}_\tau(x)=L_{1,1} $, therefore $\|L_{1,1}|V^s(x) \| \leq N (1+\delta)^{-\tau}$ or $\|L_{1,1}^{-1}|V^u(x) \| \leq N (1+\delta)^{-\tau}$.

Note that $ L_{1,1}|V^s(x)=( \prod^{\ell-1}_{i=0}\psi_{t_{i+1}-t_i}\circ Q_i)|V^s(x)$,
$ L_{1,1}^{-1}|V^u(x)=\psi_{-t_1} \circ \prod^1_{i=\ell-1}P_i\circ \psi_{t_i-t_{i+1}}|V^u(x)$.

Hence
$$ (\epsilon_1 n^{-\frac{3}{2}})^{\frac{\tau}{T^\prime}} \prod^{\ell-1}_{i=0} \|\psi_{t_{i+1}-t_i}|V^s_i \|\leq  N (1+\delta)^{-\tau},$$
or
$$ (\epsilon_1 n^{-\frac{3}{2}})^{\frac{\tau}{T^\prime}} \prod^{\ell-1}_{i=0} \|\psi_{t_i -t_{i+1}}|V^u_{i+1} \|\leq  N (1+\delta)^{-\tau}.$$

Let $ \lambda_1=\frac{1+\frac{1}{2}\delta}{1+\delta}$, then

$$\prod^{\ell-1}_{i=0} \|\psi_{t_{i+1}-t_i}|V^s_i \|\leq N\lambda_1^\tau,$$
or
$$\prod^{\ell-1}_{i=0} \|\psi_{t_i -t_{i+1}}|V^u_{i+1} \|\leq  N\lambda_1^\tau.$$

\end{itemize}

\end{proof}

\end{document}